\newcommand{\Z}{\mathbb{Z}}
\newcommand{\Q}{\mathbb{Q}}
\newcommand{\lm}{\varprojlim}
\newtheorem{theorem}{Theorem}
\newtheorem{lemma}[theorem]{Lemma}
\newtheorem{corollary}[theorem]{Corollary}
\newtheorem{question}[theorem]{Question}
\newtheorem{example}[theorem]{Example}
\begin{document}

\begin{center}
\texttt{Comments, corrections, and related references welcomed,
as always!}\\[.5em]
{\TeX}ed \today
\vspace{2em}
\end{center}

\title%
{Every module is an inverse limit of injectives}
\thanks{This preprint is accessible online at
\url{http://math.berkeley.edu/~gbergman/papers/}
and at \url{http://arXiv.org/abs/arXiv:1104.3173}\,.
The former version is likely to be updated more frequently than
the latter.
}

\subjclass[2010]{Primary: 16D50, 18A30.
Secondary: 13C11, 16D90.}
\keywords{inverse limit of injective modules}

\author{George M. Bergman}
\address{University of California\\
Berkeley, CA 94720-3840, USA}
\email{gbergman@math.berkeley.edu}

\begin{abstract}
It is shown that any left module $A$ over a ring
$R$ can be written as the intersection of a downward
directed system of injective submodules of an injective module;
equivalently, as an inverse limit of one-to-one homomorphisms
of injectives.
If $R$ is left Noetherian, $A$ can also
be written as the inverse limit of
a system of surjective homomorphisms of injectives.

Some questions are raised.
\end{abstract}
\maketitle

The flat modules over
a ring are precisely the direct limits of projective modules
\cite{DL} \cite{VEG} \cite[Theorem~2.4.34]{TYL}.
Which modules are, dually, inverse limits of injectives?

I sketched the answer in~\cite{250B_FE},
but in view of the limited distribution of that item, 
it seems worthwhile to make the result more widely available.
The construction from~\cite{250B_FE} is Theorem~\ref{T.submods} below;
the connecting maps there are inclusions.
In Theorem~\ref{T.onto}, we shall see that the connecting
maps can, alternatively, be
taken to be onto, if $R$ is Noetherian on the appropriate side.

In \S\ref{S.questions+} we ask some questions, in \S\ref{S.onto_ctbl}
we take some steps toward answering one
of them, and in \S\ref{S.gen} we note what the
proofs of our results tell us when applied to not necessarily injective
modules.

Throughout, ``ring'' means associative ring with unit,
and modules are unital.

I am indebted to Pace Nielsen for pointing out the need to
assume $\kappa$ regular in Lemma~\ref{L.prod^kappa},
and to the referee for some useful suggestions.

\section{Main results}\label{S.main}
We will need the following generalization of the familiar
observation (\cite[Proposition~I.3.1]{C+E},
\cite[Proposition~IV.3.7]{Hungerford})
that a direct product of injective modules is injective.
(It is a generalization because on taking $\kappa>\mathrm{card}(I),$
it yields that result.)

\begin{lemma}\label{L.prod^kappa}
Let $R$ be a ring, $\kappa$ an infinite regular cardinal such that
every left ideal of $R$ can be generated by $<\kappa$ elements,
and $(M_i)_{i\in I}$ a family of left $\!R\!$-modules.
Let
\begin{equation}\begin{minipage}[c]{35pc}\label{d.prod^kappa}
$\prod^\kappa_I\,M_i\ =\ \{x\in\prod_I M_i\,\mid\,x$ has
support of cardinality $<\kappa$ in $I\,\}.$
\end{minipage}\end{equation}
Then if all $M_i$ are injective, so is $\prod^\kappa_I M_i.$
\end{lemma}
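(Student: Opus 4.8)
The plan is to verify Baer's criterion for $N=\prod^\kappa_I M_i$: that every homomorphism from a left ideal $L\subseteq R$ into $N$ extends to a homomorphism $R\to N$. Along the way one checks the routine preliminary that $\prod^\kappa_I M_i$ really is an $\!R\!$-submodule of $\prod_I M_i$, using that a union of two (indeed, of finitely many) subsets of $I$ each of cardinality $<\kappa$ again has cardinality $<\kappa$, and that multiplication by a ring element does not enlarge the support of a tuple.

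So let $L\subseteq R$ be a left ideal and $f\colon L\to N$ a homomorphism. By hypothesis $L$ is generated by some family $(a_j)_{j\in J}$ with $\mathrm{card}(J)<\kappa$. For each $j$ the element $f(a_j)\in N$ has support $S_j\subseteq I$ with $\mathrm{card}(S_j)<\kappa$. I would then put $S=\bigcup_{j\in J}S_j$; this is precisely where regularity of $\kappa$ is used, guaranteeing $\mathrm{card}(S)<\kappa$ since it is a union of $<\kappa$ many sets each of cardinality $<\kappa$. Every element of $L$ is an $\!R\!$-linear combination of finitely many of the $a_j$, so its image under $f$ has support contained in $S$; hence $f$ factors through the submodule $\prod_{i\in S}M_i\subseteq N$, where a tuple indexed by $S$ is regarded as a tuple indexed by $I$ by extending it by zero outside $S$.

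Now $\prod_{i\in S}M_i$ is a direct product of injective modules, hence injective by the classical result recalled above. Therefore $f\colon L\to\prod_{i\in S}M_i$ extends to a homomorphism $g\colon R\to\prod_{i\in S}M_i$. Since $\mathrm{card}(S)<\kappa$, every element of $\prod_{i\in S}M_i$, viewed via zero-extension inside $\prod_I M_i$, has support of cardinality $<\kappa$ and so lies in $N$; composing $g$ with the inclusion $\prod_{i\in S}M_i\hookrightarrow N$ yields the desired extension of $f$ to $R$. By Baer's criterion, $N=\prod^\kappa_I M_i$ is injective.

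The only genuinely substantive point is the appeal to regularity of $\kappa$: were $\kappa$ singular, one could have a left ideal on $<\kappa$ generators whose images under $f$ have supports whose union has cardinality $\kappa$, so that $f(L)$ need not land in any $\prod_{i\in S}M_i$ with $\mathrm{card}(S)<\kappa$, and the reduction to the classical case would collapse. Everything else is straightforward bookkeeping with supports.
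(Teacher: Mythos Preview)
Your proof is correct and follows essentially the same route as the paper's: verify Baer's criterion by choosing a generating set of cardinality $<\kappa$ for the left ideal, use regularity of $\kappa$ to bound the union of the supports of the generators' images, and then extend via the injectivity of the full product over that small index set. The only cosmetic differences are that you spell out why $\prod^\kappa_I M_i$ is a submodule and invoke ``a product of injectives is injective'' where the paper says ``lift componentwise,'' which amount to the same thing.
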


\begin{proof}
To show $\prod^\kappa_I M_i$ injective, it suffices
by \cite[Theorem~I.3.2]{C+E} \cite[Lemma~IV.3.8]{Hungerford}
to show that for every left ideal $L$ of $R,$ every
module homomorphism $h:L\to\prod^\kappa_I M_i$
can be extended to all of $R.$
By choice of $\kappa,$ $L$ has a generating set $X$ of cardinality
$<\kappa,$ and by definition of $\prod^\kappa_I M_i,$
the image under $h$ of each member of $X$ has support of cardinality
$<\kappa $ in $I.$
Hence by regularity of $\kappa,$ the union $I_0\subseteq I$
of these supports has cardinality $<\kappa.$
Clearly $h(L)$ has support in $I_0,$ hence
$h$ can be looked at as a homomorphism $L\to\prod_{I_0} M_i.$
As each $M_i$ is injective, we can
now lift $h$ componentwise to a homomorphism
$R\to \prod_{I_0} M_i\subseteq\prod^\kappa_I M_i,$ as desired.
\end{proof}

\begin{theorem}\label{T.submods}
Let $R$ be a ring.
Then every left $\!R\!$-module $A$ can be written
as the intersection of a downward directed system of injective
submodules of an injective module; in other words, as the inverse limit
of a system of injective modules and one-to-one homomorphisms.
\end{theorem}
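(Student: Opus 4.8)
The plan is to produce, inside a sufficiently large injective module $N$ into which $A$ embeds, a downward directed family of injective submodules whose intersection is (the image of) $A$; by the reformulation in the statement this is the same as the desired inverse limit of injectives with one-to-one connecting maps. The starting point is the elementary observation that $A$ is already an intersection of \emph{two} injective submodules of an injective module. Embed $A$ in an injective module $E$ (say $E=E(A)$), embed $E/A$ in an injective module $Q$ (say $Q=E(E/A)$), and let $f\colon E\to Q$ be the composite $E\twoheadrightarrow E/A\hookrightarrow Q$, so that $\ker f=A$. Inside the injective module $E\times Q$ both $E\times\{0\}$ and the graph $\{(e,f(e)):e\in E\}$ (isomorphic to $E$, hence injective) are injective submodules, and their intersection is precisely $\{(a,0):a\in A\}$. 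More flexibly, writing $G=\operatorname{Hom}_R(E/A,Q)$ and regarding each $g\in G$, via $E\twoheadrightarrow E/A$, as a map $E\to Q$ killing $A$, the graphs $\{(e,g(e)):e\in E\}$ for $g\in G$ are injective submodules of $E\times Q$ with intersection $\{(a,0):a\in A\}$: the member $g=0$ gives $E\times\{0\}$, and the member coming from $E/A\hookrightarrow Q$ has kernel exactly $A$.

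The main obstacle, and what I expect to be the heart of the argument, is that no such family of graphs is downward directed: a graph of a homomorphism out of $E$ is always isomorphic to $E$, so the intersection of two distinct graphs is a proper submodule containing no member of the family, and more generally any injective submodule of $E(A)$ containing $A$ equals $E(A)$. So one genuinely needs an ambient injective $N$ much bigger than $E(A)$, with enough ``room'' that the injective submodules containing $A$ are spread out and can be arranged into a directed family.

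To repair things I would work in $N=E\times\prod^{\kappa}_{I}Q$ for a large index set $I$, where $\kappa$ is an infinite regular cardinal bounding the number of generators needed for left ideals of $R$; Lemma~\ref{L.prod^kappa} then guarantees that $N$, and the $\kappa$-small products of copies of $Q$ that arise, are injective. For $<\kappa$-sized subsets $S$ of $I$ one wants injective submodules $B_S$ of $N$ containing the copy $A\times\{0\}$ of $A$, chosen so that $S\subseteq S'$ implies $B_{S'}\subseteq B_S$ and $\bigcap_S B_S=A\times\{0\}$; then $\{B_S\}$ is downward directed, with common lower bound $B_{S_1\cup S_2}$ for $B_{S_1}$ and $B_{S_2}$, and $A=\varprojlim_S B_S$. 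The delicate point is to force the intersection to be exactly $A$ rather than a larger copy of $E$: the ``$g(e)=0$''-type conditions that would push the $E$-coordinate into $A=\ker f$ are not by themselves compatible with injectivity of a single $B_S$, so one must spread the constraints over enough mutually independent coordinates of $\prod^{\kappa}_{I}Q$ — one reason the large product is needed — that they can be imposed in a nested fashion without ever demanding that one coordinate satisfy two conflicting conditions. Injectivity of each $B_S$ should then come out by exhibiting it as the kernel of a split surjection of $N$ onto a $\kappa$-small product of copies of $Q$, hence as a direct summand of the injective module $N$.
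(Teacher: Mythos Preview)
Your approach is correct and is essentially the paper's proof: the $B_S$ you are reaching for is simply the ``multi-graph'' $\{(e,(q_i))\in E\times\prod^{\kappa}_{I}Q : q_i=f(e)\text{ for all }i\in S\}$, and the paper lets $S$ range over the \emph{finite} subsets of $I$ (which suffices and is slightly cleaner than $<\kappa$-sized subsets). Each $B_S$ is then visibly isomorphic to $E\times\prod^{\kappa}_{I\setminus S}Q$, hence injective by Lemma~\ref{L.prod^kappa}, and $\bigcap_S B_S=A\times\{0\}$ because an element with $q_i=f(e)$ for every $i\in I$ has support of size $|I|\geq\kappa$ unless $f(e)=0$, i.e., $e\in A$ --- exactly the support trick you anticipated.
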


\begin{proof}
Given $A,$ choose an exact sequence of modules
\begin{equation}\begin{minipage}[c]{35pc}\label{d.seq}
$0\ \to\ A\ \to\ M\ \to\ N$
\end{minipage}\end{equation}
with $M$ and $N$ injective, as we may by \cite[Theorem~I.3.3]{C+E},
and call the second map $f:M\to N.$
Taking a cardinal $\kappa$ as in the preceding lemma
(for example, any infinite regular cardinal $>|R|),$ and a set $I$
of cardinality $\geq\kappa,$ one element of which we will
denote $0,$ we define $\!R\!$-modules $M_i$ $(i\in I)$
by letting $M_0=M,$ and $M_i=N$ for $i\neq 0.$

Now let $P=\prod^\kappa_I M_i,$ and for each finite subset
$D\subseteq I-\{0\},$ let $P_D\subseteq P$ be the submodule
of elements $(x_i)_{i\in I}$ such that for
all $i\in D,$ $x_i=f(x_0).$
Clearly, each element of $P_D$ is determined
by its components at the indices in $I-D,$ from which we see that
$P_D\cong\prod^\kappa_{I-D} M_i;$
so by Lemma~\ref{L.prod^kappa}, $P_D$ is injective.
The family of submodules $P_D$ is downward directed, since
$P_{D_1}\cap P_{D_2}=P_{D_1\cup D_2}.$

Now $\bigcap_D P_D\subseteq P$
consists of the elements $x\in P$ such that for
all $i\in I-\{0\},$ $x_i=f(x_0).$
Each such $x$ is determined by its coordinate $x_0\in M;$
but to lie in $P,$ such an element
must have support of cardinality $<\kappa,$
which only happens if $x_0\in\ker f.$
Thus, $\bigcap_D P_D\cong\ker f=A.$
\end{proof}

Note that in the construction of the above
proof, if $R$ is left Noetherian
then $\kappa$ can be taken to be $\aleph_0,$ and $I$ countable;
so the intersection is over the finite subsets of a countable set,
giving a countably indexed inverse system.
In that situation, $\prod^\kappa_I M_i$ is simply
$\bigoplus_I M_i,$ and Lemma~\ref{L.prod^kappa} then says that
the class of injective $\!R\!$-modules is closed under direct sums
(a known result, \cite[Proposition~2.1]{EM}.
In fact, that condition is necessary
and sufficient for $R$ to be left Noetherian
\cite[Theorem~1]{ZP} 
\cite[Theorem~1.1]{HB} 
\cite[Theorem~20.1]{CF},
a result variously called the Matlis-Papp Theorem, the
Cartan-Eilenberg-Bass Theorem, and by other combinations of these
names.)
We shall use this closure under
direct sums in the proof of our next theorem, along with
the following fact.
\begin{equation}\begin{minipage}[c]{35pc}\label{d.LH}
There exists an inverse system, indexed by the first uncountable
ordinal $\omega_1,$ of nonempty sets $S_\alpha$ and
surjective maps $f_{\alpha\beta}: S_\beta\to S_\alpha$
$(\alpha\leq\beta\in\omega_1),$ which has empty inverse limit
\cite{LH} \cite[\S2]{H+S} \cite{emptylim}.
\end{minipage}\end{equation}

Again, we begin with a general lemma.

\begin{lemma}[{after \cite[\S3]{H+S}; cf.\ \cite[Corollary~8]{emptylim}}]\label{L.zerolim}
Suppose $(S_\alpha,\,f_{\alpha\beta})_{\alpha\leq\beta\in\omega_1}$
is an inverse system of sets with the properties
stated in~\textup{(\ref{d.LH})}, and $N$ a
left module over a ring $R.$
To each $\alpha\in\omega_1,$ let us associate the direct sum
$\bigoplus_{S_\alpha}{\kern-.1em}N$ of an $\!S_\alpha\!$-tuple
of copies of $N;$ and for $\alpha\leq\beta,$ let $\varphi_{\alpha\beta}:
\bigoplus_{S_\beta}{\kern-.1em}N\to\bigoplus_{S_\alpha}{\kern-.1em}N$
be the map sending $(x_j)_{j\in S_\beta}$ to the element
$(y_i)_{i\in S_\alpha}$ with components
$y_i=\sum_{f_{\alpha\beta}(j)\,=\,i}\,x_j.$

Then each $\varphi_{\alpha\beta}$ is surjective, but
the inverse limit of the above system is the zero module.
\end{lemma}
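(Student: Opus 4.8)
The plan is to verify the two assertions separately: surjectivity of $\varphi_{\alpha\beta}$ by exhibiting an explicit section, and vanishing of the inverse limit by showing that a nonzero element would produce a thread in the set-level system $(S_\alpha,f_{\alpha\beta}),$ contradicting~(\ref{d.LH}).

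For surjectivity of $\varphi_{\alpha\beta}$ (with $\alpha\leq\beta$): since $f_{\alpha\beta}:S_\beta\to S_\alpha$ is surjective, choose a one-to-one map $s:S_\alpha\to S_\beta$ with $f_{\alpha\beta}\circ s=\mathrm{id}_{S_\alpha},$ and let $\psi:\bigoplus_{S_\alpha}N\to\bigoplus_{S_\beta}N$ carry the element with $i$-th coordinate $y$ and all other coordinates $0$ to the element with $s(i)$-th coordinate $y$ and all other coordinates $0,$ extended additively. In the sum $\sum_{f_{\alpha\beta}(j)=i}z_j$ computing the $i$-th coordinate of $\varphi_{\alpha\beta}(\psi(\,\cdot\,)),$ the only index $j$ with $f_{\alpha\beta}(j)=i$ at which $z_j$ can be nonzero is $j=s(i);$ hence $\varphi_{\alpha\beta}\circ\psi=\mathrm{id},$ so $\varphi_{\alpha\beta}$ is (split) surjective.

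Now let $\xi=(\xi^\alpha)_{\alpha\in\omega_1}$ lie in $\lm_\alpha\bigoplus_{S_\alpha}N,$ write $\xi^\alpha=(x^\alpha_i)_{i\in S_\alpha},$ and let $T_\alpha=\{i\in S_\alpha:x^\alpha_i\neq 0\},$ a finite set. The compatibility relations say $x^\alpha_i=\sum_{f_{\alpha\beta}(j)=i}x^\beta_j;$ so if $i\in T_\alpha$ this sum is nonzero, whence some $j\in T_\beta$ has $f_{\alpha\beta}(j)=i,$ giving $f_{\alpha\beta}(T_\beta)\supseteq T_\alpha$ for all $\alpha\leq\beta.$ In particular $|T_\alpha|\leq|T_\beta|$ for $\alpha\leq\beta,$ so the function $\alpha\mapsto|T_\alpha|,$ being non-decreasing and $\mathbb{N}\!$-valued on $\omega_1,$ is eventually constant, say equal to $n$ for all $\alpha\geq\alpha_0.$ For $\alpha_0\leq\alpha\leq\beta,$ the map $f_{\alpha\beta}$ then carries the $n\!$-element set $T_\beta$ onto a set which contains the $n\!$-element set $T_\alpha,$ forcing $f_{\alpha\beta}$ to restrict to a \emph{bijection} $T_\beta\to T_\alpha.$

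Suppose $\xi\neq 0.$ Then $n\geq 1$ (otherwise all $T_\alpha$ would be empty, since $|T_\alpha|\leq|T_{\alpha_0}|=0$ also for $\alpha<\alpha_0$), so $T_{\alpha_0}$ is nonempty; pick $t_{\alpha_0}\in T_{\alpha_0}.$ For each $\beta\geq\alpha_0$ let $t_\beta$ be the unique element of $T_\beta$ with $f_{\alpha_0\beta}(t_\beta)=t_{\alpha_0}.$ Using $f_{\alpha_0\beta}=f_{\alpha_0\alpha}\circ f_{\alpha\beta},$ the fact that $f_{\alpha\beta}(t_\beta)\in T_\alpha,$ and injectivity of $f_{\alpha_0\alpha}|_{T_\alpha},$ one gets $f_{\alpha\beta}(t_\beta)=t_\alpha$ for $\alpha_0\leq\alpha\leq\beta;$ and setting $t_\alpha=f_{\alpha\alpha_0}(t_{\alpha_0})$ for $\alpha<\alpha_0$ extends $(t_\alpha)$ to a thread over all of $\omega_1,$ i.e.\ a member of $\lm_\alpha S_\alpha,$ contradicting~(\ref{d.LH}). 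Hence $\xi=0,$ and the inverse limit is the zero module. The step I expect to be the main obstacle is precisely the possibility of cancellation in the sums $\sum_{f_{\alpha\beta}(j)=i}x^\beta_j$: this is why the supports $T_\alpha$ need not form a subsystem of $(S_\alpha)$ on the nose, and why one must first stabilize their cardinalities before the connecting maps restrict to bijections and a thread can be built.
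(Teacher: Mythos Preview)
Your proof is correct and follows essentially the same route as the paper's: establish that the supports $T_\alpha$ have nondecreasing finite cardinalities, use the uncountable cofinality of $\omega_1$ to stabilize those cardinalities at some $n,$ observe that the connecting maps then restrict to bijections among the $T_\alpha,$ and conclude that $n\geq 1$ would yield a thread in $\lm S_\alpha,$ contradicting~(\ref{d.LH}). You supply more detail than the paper in a few places---an explicit section for surjectivity, the explicit verification that $f_{\alpha\beta}(T_\beta)\supseteq T_\alpha,$ and the explicit extension of the thread below $\alpha_0$---but the argument is the same.
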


\begin{proof}[Sketch of proof]
We imitate the argument of \cite{H+S}
(where $R$ was a field and $N$ was $R).$
Surjectivity of the $\varphi_{\alpha\beta}$ is clear.
Now suppose $x$ belongs to the inverse limit, and let us
write its components $x^{(\alpha)}\in\bigoplus_{S_\alpha}{\kern-.1em}N$
$(\alpha\in\omega_1).$
For each $\alpha\in\omega_1,$ let $T_\alpha\subseteq S_\alpha$
be the (finite) support of $x^{(\alpha)}.$
We see that the cardinalities of the $T_\alpha$ are monotonically
nondecreasing in $\alpha;$ hence, since
$\omega_1$ has uncountable cofinality, the supremum of
those cardinalities must be finite.
(Indeed, for each $n$ such that some $|T_\alpha|$ equals $n,$
let us choose an $\alpha_n$ realizing this value.
Then the at most countably many indices $\alpha_n$
have a supremum, $\alpha_{\sup}\in\omega_1,$
and the finite value $|T_{\alpha_{\sup}}|$ will bound all $|T_\alpha|.)$

Calling this supremum $n,$ we see that
the set of $\alpha\in\omega_1$ such that $|T_\alpha|=n$ is
an up-set in $\omega_1,$
and that whenever $\alpha\leq \beta$ are both in this up-set,
the connecting map $f_{\alpha\beta}$ gives a bijection
$T_\beta\to T_\alpha.$
These $\!n\!$-element sets $T_\alpha$ thus lead to an $\!n\!$-tuple of
elements of $\lm\,S_{\alpha}.$
But by assumption, that limit set is empty.
Hence $n=0,$ so all $x^{(\alpha)}$ are~$0,$ so $x=0.$
\end{proof}

We can now prove

\begin{theorem}\label{T.onto}
Let $R$ be a left Noetherian ring.
Then every left $\!R\!$-module $A$ can be written as the inverse limit
of a system, indexed by $\omega_1,$ of surjective homomorphisms
of injective modules.
\end{theorem}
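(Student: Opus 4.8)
The plan is to realize $A$ as a pullback of the injective module $M$ over the zero-limit system furnished by Lemma~\ref{L.zerolim}. First I would reuse the opening of the proof of Theorem~\ref{T.submods}: fix an exact sequence $0\to A\to M\xrightarrow{\,f\,}N$ with $M$ and $N$ injective. Fix also, as in~\eqref{d.LH}, an inverse system $(S_\alpha,f_{\alpha\beta})_{\alpha\le\beta\in\omega_1}$ of nonempty sets with surjective connecting maps and empty inverse limit, and form the associated inverse system of modules $(\bigoplus_{S_\alpha}N,\,\varphi_{\alpha\beta})$ of Lemma~\ref{L.zerolim}; recall that each $\varphi_{\alpha\beta}$ is surjective and that $\lm_\alpha\bigoplus_{S_\alpha}N=0$. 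Here $R$ being left Noetherian is what lets us know that each $\bigoplus_{S_\alpha}N$ is injective (the closure of injectives under direct sums noted after Theorem~\ref{T.submods}).

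Now for each $\alpha\in\omega_1$ I would let $E_\alpha\subseteq M\oplus\bigoplus_{S_\alpha}N$ be the submodule of pairs $(m,u)$ with $f(m)=\sum_{s\in S_\alpha}u_s$ (a finite sum); equivalently $E_\alpha=M\times_N\bigoplus_{S_\alpha}N$, the pullback of $f$ along the ``sum of coordinates'' map $\bigoplus_{S_\alpha}N\to N$. For $\alpha\le\beta$ let $\psi_{\alpha\beta}:E_\beta\to E_\alpha$ be the restriction of $\mathrm{id}_M\oplus\varphi_{\alpha\beta}$; this carries $E_\beta$ into $E_\alpha$ because $\sum_{t\in S_\alpha}(\varphi_{\alpha\beta}u)_t=\sum_{s\in S_\beta}u_s$, and the compatibility of the $\psi_{\alpha\beta}$ follows from that of the $\varphi_{\alpha\beta}$. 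Three things then need checking: that each $E_\alpha$ is injective, that each $\psi_{\alpha\beta}$ is surjective, and that $\lm_\alpha E_\alpha\cong A$.

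Injectivity is routine: choosing any $s_0\in S_\alpha$, an element $(m,u)\in E_\alpha$ is freely determined by $m$ and the coordinates $(u_s)_{s\ne s_0}$, with $u_{s_0}=f(m)-\sum_{s\ne s_0}u_s$, so $E_\alpha\cong M\oplus\bigoplus_{S_\alpha\setminus\{s_0\}}N$, a direct sum of injectives, hence injective. Surjectivity of $\psi_{\alpha\beta}$ is also quick: given $(m,u')\in E_\alpha$, choose $u$ with $\varphi_{\alpha\beta}(u)=u'$ (possible since $\varphi_{\alpha\beta}$ is onto); then $\sum_s u_s=\sum_t u'_t=f(m)$ automatically, so $(m,u)\in E_\beta$ maps onto $(m,u')$.

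The substantive step — and the one I expect to be the heart of the matter — is the computation of the inverse limit, which is exactly where Lemma~\ref{L.zerolim} (hence~\eqref{d.LH}) is indispensable. A thread in $\lm_\alpha E_\alpha$ is a family $((m^{(\alpha)},u^{(\alpha)}))_\alpha$; since $\psi_{\alpha\beta}$ is the identity on the $M$-coordinate, all $m^{(\alpha)}$ coincide with a single $m\in M$, and the $u^{(\alpha)}$ form a thread in $(\bigoplus_{S_\alpha}N,\varphi_{\alpha\beta})$, whose inverse limit is $0$ by Lemma~\ref{L.zerolim}; hence every $u^{(\alpha)}=0$, and the defining relation of $E_\alpha$ collapses to $f(m)=0$, i.e.\ $m\in\ker f=A$. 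Conversely each $m\in A$ gives the thread $(m,0)$, and these assignments are mutually inverse module homomorphisms, so $\lm_\alpha E_\alpha\cong A$. The point of the argument is that one cannot simply adjoin a fixed copy of $A$ as a direct summand of the terms of some surjective inverse system of injectives, since $A$ need not be injective (e.g.\ $A=\Z$ over $R=\Z$, where no $\omega$-indexed such system can have limit $\Z$, since its limit would surject onto an injective, hence divisible, first term); the discrepancy must instead be absorbed into a system assembled from injectives that nonetheless has vanishing inverse limit over $\omega_1$, which is precisely the content of Lemma~\ref{L.zerolim}.
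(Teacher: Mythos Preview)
Your proof is correct and is essentially identical to the paper's: your $E_\alpha$ is exactly the paper's $P_\alpha$, defined by the same relation $\sum_{s\in S_\alpha}u_s=f(m)$ inside $M\oplus\bigoplus_{S_\alpha}N$, with the same connecting maps, the same injectivity argument via $E_\alpha\cong M\oplus\bigoplus_{S_\alpha\setminus\{s_0\}}N$, and the same computation of the inverse limit via Lemma~\ref{L.zerolim}. Your framing of $E_\alpha$ as the pullback $M\times_N\bigoplus_{S_\alpha}N$ and your explicit verification that $\varphi_{\alpha\beta}$ preserves the coordinate sum are nice touches, but the argument is the paper's own.
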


\begin{proof}
Again let $f:M\to N$ be
a homomorphism of injective left $\!R\!$-modules with kernel $A.$
Let us take the inverse system of direct sums of copies of $N$
described in Lemma~\ref{L.zerolim}, and append to each of
these direct sums a copy of $M,$ getting modules
\begin{equation}\begin{minipage}[c]{35pc}\label{d.M+sum_N}
$M\oplus\,\bigoplus_{S_\alpha}{\kern-.1em}N$ $(\alpha\in\omega_1),$
\end{minipage}\end{equation}
which we connect using maps that act on $M$ as the identity,
and act on the direct sums of copies of $N$ by the connecting
morphisms of Lemma~\ref{L.zerolim}.
Assuming for notational convenience that none
of the $S_\alpha$ contains an element named $0,$
let us write the general element of~(\ref{d.M+sum_N})
as $(x_i)_{i\in\{0\}\cup S_\alpha},$
where $x_0\in M$ and the other components are in~$N.$

We now define, for each $\alpha\in\omega_1,$
\begin{equation}\begin{minipage}[c]{35pc}\label{d.P_i}
$P_\alpha\ =\ \{x=(x_i)_{i\in\{0\}\cup S_\alpha}\in
M\oplus\,\bigoplus_{S_\alpha}{\kern-.1em}N\ \mid
\ \sum_{i\in S_\alpha} x_i=f(x_0)\}.$
\end{minipage}\end{equation}

Note that for each $\alpha,$ if we choose any
$i_0\in S_\alpha,$
then we can specify an element $x\in P_\alpha$ by choosing
its components other than $x_{i_0}$ to comprise an arbitrary
member of $M\oplus\,\bigoplus_{S_\alpha-\{i_0\}}{\kern-.1em}N.$
The value of $x_{i_0}$ will then be determined by the relation
$\sum_{i\in S_\alpha} x_i=f(x_0).$
Thus,
$P_\alpha\cong M\oplus\,\bigoplus_{S_\alpha-\{i_0\}}{\kern-.1em}N,$
a direct sum of injectives, so
since $R$ is left Noetherian, each $P_\alpha$ is injective.
Clearly, the inverse system of surjective maps among the
modules~(\ref{d.M+sum_N}) induces an inverse system
of surjective maps among the submodules~(\ref{d.P_i}).

In a member of $\lm_{\omega_1}P_\alpha,$ the
$\bigoplus_{S_\alpha}\! N\!$-components, as $\alpha$
ranges over $\omega_1,$ will form a member of the inverse limit of
the system of Lemma~\ref{L.zerolim}; hence these components must all
be zero.
Thus, the corresponding $\!M\!$-components must belong to $\ker f=A.$
Since the connecting maps on these components
are the identity map of $M,$ the inverse limit is $A\subseteq M.$
\end{proof}


(Incidentally, Theorem~\ref{T.submods} or~\ref{T.onto} yields a
correct proof of \cite[Lemma~3]{LS}, the statement that $\Z$ is an
inverse limit of injective abelian groups.
The construction of \cite{LS} is similar to our proof of
Theorem~\ref{T.submods},
but since the groups $H_j$ used there are uniquely
$\!p\!$-divisible for all odd primes $p,$ their intersection
is $\!p\!$-divisible, and so is not $\Z.)$

For further examples of unexpectedly small inverse
limits, see~\cite{emptylim}, \cite{LH}, \cite{H+S}, \cite{WCW}.
Some questions about these are noted in \cite[\S\S4-5]{emptylim}.

\section{Questions}\label{S.questions+}
Theorem~\ref{T.onto} leaves open

\begin{question}\label{Q.nonNoe}
For non-left-Noetherian $R,$ which left $\!R\!$-modules are inverse
limits of systems of surjective maps of injective $\!R\!$-modules?
\textup{(}All?\textup{)}
Does the answer change if we restrict ourselves to systems indexed,
as in Theorem~\ref{T.onto}, by $\omega_1$?
\end{question}

We noted following Theorem~\ref{T.submods} that for $R$ Noetherian,
the construction used there involved a countable inverse system.
This suggests

\begin{question}\label{Q.1-1_ctbl}
For non-left-Noetherian $R,$ which left $\!R\!$-modules are
inverse limits of \emph{countable} systems of \emph{one-to-one} maps of
injective $\!R\!$-modules?
\textup{(}All?\textup{)}
\end{question}

On the other hand, the construction of Theorem~\ref{T.onto} used
uncountable inverse systems in all cases, and so leaves open

\begin{question}\label{Q.onto_ctbl}
For a \textup{(}left Noetherian or general\textup{)} ring $R,$
which left $\!R\!$-modules are inverse limits of \emph{countable}
systems of \emph{surjective} maps of injective left $\!R\!$-modules?
\end{question}

\section{Partial results on Question~\ref{Q.onto_ctbl}}\label{S.onto_ctbl}

The answer to Question~\ref{Q.onto_ctbl} cannot be either
``all modules'' or ``only the injectives'', even for $R=\Z,$ as will
be shown by Corollary~\ref{C.big-div} and Example~\ref{E.non-div},
respectively.

In describing inverse limits, we have indexed our inverse
systems so that the
connecting maps go from higher- to lower-indexed objects.
In direct limits, which appear beside
inverse limits in the following preparatory lemma, we shall take
the connecting maps to go from lower- to higher-indexed objects.
(Thus, in each kind of limit,
our index-sets are \emph{upward} directed.)

\begin{lemma}\label{L.proj-inj}
Let $R$ be a ring.
Let $M$ be the inverse limit of a
countable system of injective left $\!R\!$-modules $M_\alpha$
and surjective homomorphisms
$\varphi_{\alpha\beta}:M_\beta\to M_\alpha$
$(\alpha\leq\beta,\ \alpha,\beta\in I),$
and let $N$ be the \emph{direct limit} of a
countable system of \emph{projective} left $\!R\!$-modules $N_\gamma$
and \emph{one-to-one} homomorphisms
$\psi_{\delta\gamma}:N_\gamma\to N_\delta$
$(\gamma\leq\delta,\ \gamma,\delta\in J).$

Then any homomorphism
\begin{equation}\begin{minipage}[c]{35pc}\label{d.*g*a}
$f: N_\gamma\ \to\ M_\alpha,\ $ where $\gamma\in J,$ $\alpha\in I$
\end{minipage}\end{equation}
can be factored
\begin{equation}\begin{minipage}[c]{35pc}\label{d.viaMN}
$N_\gamma\ \to\ N\to\ M\to\ M_\alpha,$
\end{minipage}\end{equation}
where the first and last maps are the canonical ones
associated with the given direct and inverse limits \textup{(}and
the indices $\gamma$ and $\alpha),$ while the middle map
is an arbitrary module homomorphism.
\end{lemma}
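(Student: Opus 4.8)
The plan is to reduce to the case where both index sets equal $\omega$, to recast the problem of finding the middle map $g\colon N\to M$ as the construction of a grid of homomorphisms $N_j\to M_i$ that is compatible in both directions, and then to build that grid ``along a staircase'', at each step invoking either the injectivity of one of the $M_i$ or the projectivity of one of the $N_j$. For the reduction: since $J$ is a countable directed set, the up-set $\{\delta\in J:\delta\geq\gamma\}$ is cofinal in $J$ and countable, so, beginning an enumeration of it at $\gamma$, it contains a cofinal chain $\gamma=\gamma_0\leq\gamma_1\leq\cdots$. Replacing $J$ by this chain changes neither $N=\varinjlim N_\delta$ nor the canonical map $N_\gamma\to N$; its connecting maps are composites of the given $\psi$'s, hence still one-to-one, and its objects are among the $N_\delta$, hence still projective. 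Doing the same with $I$ and $\alpha$, and reindexing both chains by $\omega$, I may assume $I=J=\omega$ and $\gamma=\alpha=0$, with connecting maps $\psi_n\colon N_n\hookrightarrow N_{n+1}$ (one-to-one) and $\varphi_n\colon M_{n+1}\twoheadrightarrow M_n$ (onto), and with $f\colon N_0\to M_0$ given.

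By the universal properties of $\varinjlim$ and $\varprojlim$, a homomorphism $g\colon N\to M$ is the same data as a family $h_{j,i}\colon N_j\to M_i$ $(i,j\geq0)$ satisfying $\varphi_i\circ h_{j,i+1}=h_{j,i}$ and $h_{j+1,i}\circ\psi_j=h_{j,i}$ for all $i,j$; and under this correspondence the requirement that the composite $N_0\to N\to M\to M_0$ (the canonical maps, with $g$ in the middle) equal $f$ becomes simply $h_{0,0}=f$. So it suffices to exhibit such a grid with $h_{0,0}=f$.

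I would build it as follows. Put $h_{0,0}=f$. Given the diagonal entry $h_{n,n}\colon N_n\to M_n$, use injectivity of $M_n$ to extend it along the monomorphism $\psi_n\colon N_n\hookrightarrow N_{n+1}$ to some $h_{n+1,n}\colon N_{n+1}\to M_n$ (so $h_{n+1,n}\psi_n=h_{n,n}$), and then use projectivity of $N_{n+1}$ to lift $h_{n+1,n}$ along the epimorphism $\varphi_n\colon M_{n+1}\twoheadrightarrow M_n$ to some $h_{n+1,n+1}\colon N_{n+1}\to M_{n+1}$ (so $\varphi_n h_{n+1,n+1}=h_{n+1,n}$). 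This produces the diagonal and sub-diagonal entries for every $n$; fill in the remaining entries by transporting the diagonal ones along connecting maps, setting $h_{j,i}=\varphi_i\varphi_{i+1}\cdots\varphi_{j-1}\,h_{j,j}$ when $i\leq j$ and $h_{j,i}=h_{i,i}\,\psi_{i-1}\cdots\psi_{j+1}\psi_j$ when $i>j$. A routine case check, according to how $i$ and $i+1$ stand relative to $j$ and using only the two relations just recorded, verifies the two compatibility identities; the grid then assembles to the desired $g\colon N\to M$, completing the proof.

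The point to watch---and where a more naive attempt breaks down---is that one cannot build the grid by the obvious move of filling each cell from its two already-constructed neighbours: to complete a commuting square whose left edge is a monomorphism $\psi_j$ and whose bottom edge is an epimorphism $\varphi_i$ by a diagonal arrow requires a lifting property that genuinely fails at this level of generality (it would follow if $N_{j+1}/\psi_j(N_j)$ were projective or $\ker\varphi_i$ were injective, and in general we have neither). The staircase never confronts such a square: each of its entries is an extension into an injective or a lift out of a projective, constrained by only one prior entry, with the ``downstream'' entries then \emph{defined} by it rather than constraining it. Countability of $I$ and $J$ is precisely what lets a single staircase be cofinal enough that the limit recovers all of $N$ and $M$; for uncountable index sets the limit stages would reintroduce exactly the square-completion obstruction, which is presumably why the lemma is stated only for countable systems.
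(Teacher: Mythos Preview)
Your argument is correct and follows essentially the same approach as the paper: reduce to $\omega$-indexed chains beginning at the given indices, then build the diagonal maps by alternately invoking injectivity and projectivity, and assemble via the universal properties. The only cosmetic difference is the order of the two moves at each step---the paper first lifts $f_n:N_n\to M_n$ through $\varphi_n$ using projectivity of $N_n$, then extends the result along $\psi_n$ using injectivity of $M_{n+1}$, whereas you extend first and lift second; both staircases work, and your explicit framing in terms of the full grid $h_{j,i}$ (rather than just the diagonal $f_n$) is a harmless elaboration.
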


\begin{proof}
Let us be given a homomorphism~(\ref{d.*g*a}).

Recall that every countable directed partially ordered set (or more
generally, any directed partially ordered set of countable cofinality)
has a cofinal chain isomorphic to $\omega,$ and that a direct
or inverse limit over the original set is isomorphic to the
corresponding construction over any such chain.
In our present situation, we can clearly take such a chain
in $I$ which begins with the index $\alpha$ of~(\ref{d.*g*a}),
and such a chain in $J$ beginning with the index $\gamma.$
Hence, replacing the two given systems with the systems determined
by these chains, we may assume that our direct and inverse
system are both indexed by $\omega,$ and name the map
we wish to extend $f_0: N_0\to M_0$ (see~(\ref{d.diagram}) below).

Using the projectivity of $N_0$ and the surjectivity
of $\varphi_{01}: M_1\to M_0,$ we can now factor
$f_0$ as $\varphi_{01}\,g_0$ for some homomorphism $g_0:N_0\to M_1,$
and then, similarly using the injectivity of $M_1$ and one-one-ness
of $\psi_{10}: N_0\to N_1,$ factor $g_0$ as
$f_1\,\psi_{10}$ for some $f_1:N_1\to M_1.$
Thus, we get $f_0=\varphi_{01}\,f_1\,\psi_{10}.$

We now iterate this process, getting $f_2:N_2\to M_2,$ etc.,
where each composite $N_{i-1}\to N_i\to M_i\to M_{i-1}$ is the
preceding map $f_{i-1}:$
\begin{equation}\begin{minipage}[c]{35pc}\label{d.diagram}
\begin{picture}(150,60)
\put(5,12){$\cdots$}
\multiput(25,15)(40,0){2}{\vector(1,0){18}}
\put(5,42){$\cdots$}
\multiput(41,45)(40,0){2}{\vector(-1,0){18}}
\put(52,38){\dottedline{2}(0,0)(0,-14)\put(0,-18){\vector(0,-1){0}}}
\put(45,42){$N_i$}
\put(45,10){$M_i$}
\put(53,28){$f_i$}
\put(80,0){
\put(5,12){$\cdots$}
\multiput(25,15)(40,0){3}{\vector(1,0){18}}
\put(5,42){$\cdots$}
\multiput(41,45)(40,0){3}{\vector(-1,0){18}}
\put(45,10){
\put(00,32){$N_2$}
\put(40,32){$N_1$}\put(20,42){$\psi_{21}$}
\put(80,32){$N_0$}\put(60,42){$\psi_{10}$}
\put(00,0){$M_2$}
\put(40,0){$M_1$}\put(20,-5){$\varphi_{12}$}
\put(80,0){$M_0\,.$}\put(60,-5){$\varphi_{01}$}
\put(08,18){
\put(00,0){$f_2$}
\put(40,0){$f_1$}
\put(80,0){$f_0$}
}
}
\put(52,38){
\put(00,0){\dottedline{2}(0,0)(0,-14)\put(0,-18){\vector(0,-1){0}}}
\put(40,0){\dottedline{2}(0,0)(0,-14)\put(0,-18){\vector(0,-1){0}}}
\put(80,0){\vector(0,-1){18}
}
}
}
\end{picture}
\end{minipage}\end{equation}
In particular, each composite
$N_0\to N_i\to M_i\to M_0$ is our original map $f_0.$
Using the universal properties of direct and inverse limits,
we see that these maps induce a map $N\to M$ such that the composite
$N_0\to N\to M\to M_0$ is $f_0,$ as required.
\end{proof}

Now suppose that $R$ is a commutative principal ideal domain.
Then it is easy to verify that an $\!R\!$-module $M$ is
injective if and only if it is \emph{divisible}, i.e., if
and only if it is a homomorphic image, as an $\!R\!$-module,
of some $\!K\!$-module, where $K$ is the field of fractions of $R.$
If, further, $R\neq K,$ and $R$ has at most countably many primes,
say $p_1,\,p_2,\,\dots$ (where we allow repetitions in
this list, in case $R$ has only finitely many), then $K$ is,
as an $\!R\!$-module, the direct limit of
a chain of inclusions of free $\!R\!$-modules of rank~$1$
\begin{equation}\begin{minipage}[c]{35pc}\label{d.big-div}
$R\ \subseteq\ p_1^{-1} R\ \subseteq
\ p_1^{-2}\,p_2^{-2} R\ \subseteq\ \cdots\ \subseteq
\ p_1^{-i}\,p_2^{-i}{\dots}\ p_i^{-i} R\ \subseteq \cdots\,.$
\end{minipage}\end{equation}
Hence we can apply Lemma~\ref{L.proj-inj} with $K$ as $N,$ calling
the modules of~(\ref{d.big-div}) $N_0\subseteq N_1\subseteq\cdots\,;$
but still letting $(M_\alpha)_{\alpha\in I}$ be an arbitary countable
inverse system of injectives.
For any $\alpha\in I,$ every $x\in M_\alpha$ is, of course,
the image of the generator $1\in R=N_0$
under some homomorphism $f: N_0\to M_\alpha.$
Hence Lemma~\ref{L.proj-inj} tells us that $x$ lies in
the image of a homomorphism $K=N\to M\to M_\alpha;$
so the span in $M$ of the images of all homomorphisms
$K\to M$ maps surjectively to each $M_\alpha.$
For brevity and concreteness, we state this result below for $R=\Z.$

\begin{corollary}\label{C.big-div}
Let $M$ be the inverse limit of a countable system
of injective $\!\Z\!$-modules $M_\alpha$ and surjective homomorphisms
$\varphi_{\alpha\beta}:M_\beta\to M_\alpha.$
Let $M_\mathrm{div}$ be the largest divisible
\textup{(}equivalently, injective\textup{)} submodule
of $M,$ namely, the sum of the images of all $\Z\!$-module
homomorphisms $\Q\to M.$
Then $M_\mathrm{div}$ projects surjectively to each $M_\alpha;$ i.e.,
the composite maps $M_\mathrm{div}\hookrightarrow M\to M_\alpha$
are surjective.\qed
\end{corollary}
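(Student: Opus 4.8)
The plan is to deduce Corollary~\ref{C.big-div} from Lemma~\ref{L.proj-inj}, by making precise the discussion immediately preceding the statement. First I would set up the ``projective'' side of that lemma: since $\Z$ is a principal ideal domain with field of fractions $\Q\neq\Z$ having only countably many primes $p_1,p_2,\dots$, the $\Z$-module $\Q$ is the direct limit of the chain~(\ref{d.big-div}) of rank-one free (hence projective) submodules, with the inclusions as connecting maps (which are one-to-one). The only thing needing a word of justification here is that this chain exhausts $\Q$: an arbitrary element $a/b\in\Q$ has $b$ a product of primes from the list with bounded exponents, so once $i$ is at least the largest index and the largest exponent occurring, $a/b\in p_1^{-i}p_2^{-i}\cdots p_i^{-i}R$.

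Next I would apply Lemma~\ref{L.proj-inj}, taking this chain as the system $(N_\gamma,\psi_{\delta\gamma})$, so that $N=\Q$ with $N_0=\Z$, and taking the given countable inverse system of injectives $(M_\alpha,\varphi_{\alpha\beta})$ as the system $(M_\alpha,\varphi_{\alpha\beta})$, so that $M=\lm\,M_\alpha$; the hypotheses of that lemma are then met. Now fix an index $\alpha$ and an element $x\in M_\alpha$, and let $f:\Z=N_0\to M_\alpha$ be the homomorphism sending $1\mapsto x$. By Lemma~\ref{L.proj-inj}, $f$ factors as $N_0\to N\to M\to M_\alpha$, where the first map is the canonical inclusion $\Z\hookrightarrow\Q$, the last is the canonical projection $M\to M_\alpha$, and the middle map is some homomorphism $g:\Q\to M$.

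It remains to observe that $x$ then lies in the image of the composite $M_\mathrm{div}\hookrightarrow M\to M_\alpha$. Indeed, $x=f(1)$ is the image under $M\to M_\alpha$ of the element $g(1)$, and $g(1)\in g(\Q)$, which, being a homomorphic image of a divisible group, is itself divisible, hence (over $\Z$) an injective submodule of $M$, and so is contained in the largest divisible submodule $M_\mathrm{div}$. Since $x\in M_\alpha$ was arbitrary, $M_\mathrm{div}\to M_\alpha$ is surjective; since $\alpha$ was arbitrary, the corollary follows.

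As for where the difficulty lies: there is no real obstacle at this stage, because Lemma~\ref{L.proj-inj}---whose proof is the delicate step, the zig-zag construction of the $f_i$ along a cofinal copy of $\omega$---is doing all of the work. The only remaining points are the routine verification that~(\ref{d.big-div}) has direct limit $\Q$ and the standard facts that over $\Z$ divisibility is equivalent to injectivity and that a quotient of a divisible group is divisible, which together place $g(\Q)$ inside $M_\mathrm{div}$.
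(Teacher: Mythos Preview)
Your proposal is correct and follows essentially the same route as the paper: set up $\Q$ as the direct limit of the chain~(\ref{d.big-div}) of free rank-one modules, apply Lemma~\ref{L.proj-inj} with $N_0=\Z$ to factor the map $1\mapsto x$ through some $g:\Q\to M$, and observe that $g(\Q)\subseteq M_{\mathrm{div}}$. The paper's argument is the paragraph preceding the corollary, and your write-up just fills in the same details.
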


This shows that if $M$ is nontrivial, it must have a sizable
injective submodule.
(In particular, $M$ cannot be a nonzero finitely
generated $\!\Z\!$-module.)
However, the following example shows
that that submodule need not be all of $M.$

\begin{example}\label{E.non-div}
A countable inverse system $\dots\to M_2\to M_1\to M_0$
of injective $\!\Z\!$-modules and
surjective homomorphisms, whose inverse limit $M$ is not injective.
\end{example}

\begin{proof}[Construction and proof]
For each $n\geq 0,$ let
\begin{equation}\begin{minipage}[c]{35pc}\label{d.Q+Q/Z}
$M_n\ =\ \Q\ \oplus\ \dots\ \oplus\ \Q\ \oplus\ (\Q/\Z)\ \oplus\ \dots
\ \oplus\ (\Q/\Z)\ \oplus\ \dots\,,$
\end{minipage}\end{equation}
where the summands $\Q$ are indexed by $i=0,\dots,n-1,$ and the
$\Q/\Z$ by the $i\geq n.$
Define connecting maps $\varphi_{mn}: M_n\to M_m$ $(m\leq n)$ to act
componentwise; namely, as the identity map of $\Q,$
respectively, of $\Q/\Z,$
on the components with indices $i<m$ or $i\geq n,$ and as
the reduction map $\Q\to\Q/\Z$ on the $n-m$ intermediate components.

It is not hard to verify that the inverse limit $M$
of these modules can be identified with the submodule of
$\Q^\omega$ consisting of those elements all but finitely many of whose
components lie in $\Z.$
(Given $x\in M,$ its image in $M_0$ will have all but finitely
many components $0\in\Q/\Z,$ and these zero components will
correspond to the components of $x$ which lie in $\Z.)$

If we take an element $x\in M$ and a positive integer $k$ such that
the entries of $x$ in $\Z$ are \emph{not}
almost all divisible by $k,$ then $x$ is not divisible by $k$ in $M.$
Hence $M$ is not a divisible group, i.e., is not injective.
\end{proof}

Returning to Corollary~\ref{C.big-div}, we remark that its
method of proof, applied
to a countable inverse limit $M$ of injective modules and
surjective homomorphisms over any integral domain $R,$
shows that $M$ contains many ``highly divisible'' elements.
For most $R,$ this shows that
not all $\!R\!$-modules can occur as such inverse limits.

\section{Not necessarily injective modules}\label{S.gen}
None of the constructions we have used to get an inverse
system of modules from an exact sequence $0\to A\to M\to N$
are limited to the case where $M$ and $N$ are injective.
Let us record what they give us in general.

\begin{corollary}[to proofs of Theorems~\ref{T.submods}
and~\ref{T.onto}, and~Example~\ref{E.non-div}]\label{C.gnl}
Let $R$ be a ring, $\mathbf{M}$ a class of left $\!R\!$-modules,
$\kappa$ an infinite regular cardinal such that $\mathbf{M}$ is closed
under $\!\kappa\!$-restricted direct products $\prod_I^\kappa M_\alpha,$
and $0\to A\to M\to N$ any exact sequence of left $\!R\!$-modules
with $M,\,N\in\mathbf{M}.$
Then\vspace{0.2em}

\textup{(a)} $A$ can be written as the inverse limit of
a system of modules in $\mathbf{M}$ and one-to-one
homomorphisms.\vspace{0.2em}

\textup{(b)} If $\kappa=\aleph_0$ \textup{(}so that the
hypothesis on $\mathbf{M}$ is that it is closed under direct
\emph{sums}\textup{)}, then $A$ can be written as the inverse limit of
an $\!\omega_1\!$-indexed system of modules in $\mathbf{M}$
and \emph{surjective} homomorphisms.\vspace{0.2em}

\textup{(c)} If, again, $\kappa=\aleph_0,$ then the submodule
of $M^\omega$ consisting of those
elements with all but finitely many components in $A$ can be
written as the inverse limit of a \emph{countable}
system of modules in $\mathbf{M}$ and surjective homomorphisms.\qed
\end{corollary}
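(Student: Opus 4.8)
The strategy is to revisit the three arguments named in the statement and notice that in each of them injectivity was used at exactly one point, and always only to invoke a closure property of the class of injectives --- closure under $\kappa$-restricted products, respectively under direct sums --- that is now simply being hypothesized about $\mathbf{M}$. Everything else in those proofs is formal and indifferent to the nature of the modules, so the conclusions carry over.

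For~(a), I would re-run the proof of Theorem~\ref{T.submods} starting from the given exact sequence $0\to A\to M\to N$, with second map $f:M\to N$. Take for $\kappa$ the cardinal provided by the hypothesis, choose an index set $I$ with $\mathrm{card}(I)\ge\kappa$ and a distinguished element $0\in I$, put $M_0=M$ and $M_i=N$ for $i\ne 0$, and form $P=\prod_I^\kappa M_i$ together with the submodules $P_D$ for finite $D\subseteq I\setminus\{0\}$ exactly as there. The one appeal to Lemma~\ref{L.prod^kappa} --- to conclude that each $P_D\cong\prod_{I\setminus D}^\kappa M_i$ is injective --- is replaced by the remark that $\prod_{I\setminus D}^\kappa M_i$ is a $\kappa$-restricted product of copies of $M$ and $N$, hence lies in $\mathbf{M}$. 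Downward directedness of the $P_D$ via $P_{D_1}\cap P_{D_2}=P_{D_1\cup D_2}$, and the identification $\bigcap_D P_D\cong\ker f=A$, are then as before.

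For~(b), with $\kappa=\aleph_0$ the hypothesis says $\mathbf{M}$ is closed under arbitrary direct sums, and I would re-run the proof of Theorem~\ref{T.onto}. Lemma~\ref{L.zerolim} places no condition on $N$, so it applies verbatim; appending a copy of $M$ to each $\bigoplus_{S_\alpha}N$ and forming the submodules $P_\alpha$ of~(\ref{d.P_i}) gives $P_\alpha\cong M\oplus\bigoplus_{S_\alpha\setminus\{i_0\}}N$, a direct sum of copies of $M$ and $N$, hence a member of $\mathbf{M}$ --- this being the sole place where injectivity, and the left-Noetherian hypothesis that supplied closure under direct sums, entered. Surjectivity of the induced connecting maps and the computation that the $\omega_1$-indexed inverse limit equals $A$ are carried over unchanged.

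For~(c), again with $\kappa=\aleph_0$, I would mimic the construction in the proof of Example~\ref{E.non-div}, substituting $M,N,A$ and the map $f$ for $\Q,\Q/\Z,\Z$ and the reduction map: set $M_n=\bigoplus_{i<n}M\oplus\bigoplus_{i\ge n}N$, connected by maps acting as the identity on the already-stabilized $M$-components and the not-yet-reached $N$-components and as $f$ on the finitely many intermediate components. Each $M_n$ is a direct sum of copies of $M$ and $N$, hence in $\mathbf{M}$, and the bookkeeping of Example~\ref{E.non-div} identifies $\lm M_n$ with the submodule of $M^\omega$ of elements having all but finitely many components in $A$. The main obstacle is the one point genuinely needing care: the connecting maps are surjective exactly when $f$ is onto. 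This holds in the template of Example~\ref{E.non-div}, and in general one arranges it by taking the given sequence to be short exact, i.e.\ replacing $N$ by $f(M)=M/A$; this is permissible whenever $M/A$ itself lies in $\mathbf{M}$ --- automatic, for instance, when $\mathbf{M}$ is closed under quotients, as with the divisible abelian groups --- and verifying that one is in such a situation is the step I expect to require attention.
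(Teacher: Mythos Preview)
Your treatment of~(a) and~(b) is exactly what the paper has in mind: the corollary carries no separate proof (it ends with a bare \qed), because the constructions of Theorems~\ref{T.submods} and~\ref{T.onto} go through verbatim once the single appeal to injectivity---used only to invoke closure under $\prod^\kappa$, respectively under direct sums---is replaced by the standing hypothesis on~$\mathbf{M}$.

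For~(c) you have put your finger on a point the paper does not address. The connecting maps in Example~\ref{E.non-div} are surjective only because $\Q\to\Q/\Z$ is onto; under the corollary's hypothesis ``$0\to A\to M\to N$ exact'' the map $f$ need not be surjective, and then your componentwise map $M_{n+1}\to M_n$ fails to be onto at the one coordinate where it acts as~$f$. Your proposed repair---pass to $f(M)\cong M/A$, at the cost of requiring $M/A\in\mathbf{M}$---is the natural one, and your hesitation is justified: as printed, part~(c) appears to need either this extra hypothesis or that $f$ be onto, neither of which the paper's \qed supplies. So your argument is not deficient; you have simply noticed a small gap in the stated corollary.
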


So, for instance, by~(b), for any ring $R,$ any $\!R\!$-module
which can be written as the kernel of a homomorphism of projective
modules can also be written as the inverse limit of a system
of projective modules and surjective homomorphisms.



\end{document}